\newcommand{\dbar}{\ensuremath{\overline\partial}}
\newcommand{\C}{\ensuremath{\mathbb{C}}}
\newcommand{\sumprime}{\if@display\sideset{}{'}\sum%
            \else\sum'\fi}
\begin{document}

\numberwithin{equation}{section}

\newtheorem{theorem}{Theorem}[section]
\newtheorem{proposition}[theorem]{Proposition}
\newtheorem{conjecture}[theorem]{Conjecture}
\def\theconjecture{\unskip}
\newtheorem{corollary}[theorem]{Corollary}
\newtheorem{lemma}[theorem]{Lemma}
\newtheorem{observation}[theorem]{Observation}
\newtheorem{definition}{Definition}
\numberwithin{definition}{section} 
\newtheorem{remark}{Remark}
\def\theremark{\unskip}
\newtheorem{kl}{Key Lemma}
\def\thekl{\unskip}
\newtheorem{question}{Question}
\def\thequestion{\unskip}
\newtheorem{example}{Example}
\def\theexample{\unskip}
\newtheorem{problem}{Problem}

\thanks{Research supported by Knut and Alice Wallenberg Foundation, and the China Postdoctoral Science Foundation.}

\address{School of Mathematical Sciences, Fudan University, Shanghai, 200433, China}
\address{Current address: Department of Mathematical Sciences, Chalmers University of Technology and
University of Gothenburg. SE-412 96 Gothenburg, Sweden}
\email{wangxu1113@gmail.com}
\email{xuwa@chalmers.se}

\title[Curvature of the generalized Hodge metric]{Curvature restrictions on a manifold with a flat Higgs bundle}
 \author{Xu Wang}
\date{\today}

\begin{abstract} We shall prove a semi-negative curvature property for a manifold with a flat admissible Higgs bundle.
\bigskip

\noindent{{\sc Mathematics Subject Classification} (2010): 32A25, 53C55.}

\smallskip

\noindent{{\sc Keywords}: Higgs bundle, variation of Hodge structure, Calabi-Yau manifold, Hodge metric, curvature of subbundle.}
\end{abstract}
\maketitle

\section{Introduction}

Our motivation to write this paper is to generalize a result (see \cite{Lu99} or \cite{Lu01}, see also \cite{GS69} and \cite{Deligne69} for early results, \cite{PP14} and references therein for recent results) of Griffiths-Schmid-Deligne-Lu (on the curvature property of the Hodge metric associated to a variation of Hodge structure) to the Higgs bundle case. The notion of Higgs bundle is introduced by Hitchin in \cite{Hitchin87} for the one dimensional case and by Simpson in \cite{Simpson92} for the general case. There is a natural class of Higgs bundles coming from variation of Hodge structures, where the Higgs field is defined by using the Kodaira-Spencer map (see the next section). For a variation of Hodge structure, it is known that the holomorphic sectional curvature of the base manifold is bounded above by a negative constant (see \cite{GS69} and \cite{Deligne69}) if the associated period map is an immersion (thus the Hodge metric is well defined, see \cite{Lu99}, \cite{Lu01}, \cite{LS04} and \cite{FL05}). In \cite{Lu99}, Lu proved that if the period map is an immersion then the Hodge metric is K\"ahler (which may not be true in the mixed case, see \cite{PP14}). It is natural to ask whether the above results are true for general Higgs bundles. In this paper, we shall prove that at least Lu's curvature property of the Hodge metric can be generalized to the Higgs bundle case. More precisely, we shall define the Hodge semi-metric associated to a general Higgs bundle. And we call a Higgs bundle is admissible if the associated Hodge semi-metric is a Hermitian metric. Then our main result can be stated as follows:

\begin{theorem}\label{th:main}
 If there is a flat admissible Higgs bundle, say $H$, over a complex manifold, say $B$, then the Hodge metric on $B$ is a K\"ahler metric with semi-negative holomorphic bisectional curvature. Assume further that the $H$ is $k$-nilpotent (see Definition \ref{de:nilpotent-higgs}). Then the holomorphic sectional curvature of $B$ is bounded above by $-(k^2 {\rm Rank}(H))^{-1}$.
\end{theorem}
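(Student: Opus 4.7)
The plan is to realize admissibility as saying that $\theta$ furnishes a holomorphic embedding $\theta:TB\hookrightarrow \operatorname{End}(H)$, and to view the Hodge metric $g_B$ as the pullback of the Hilbert--Schmidt metric on $\operatorname{End}(H)$ induced by the harmonic metric $h$ on $H$ (the one for which $D:=\nabla+\theta+\theta^*$ is flat). The entire calculation is driven by type-decomposing $D^2=0$: the $(1,1)$ piece gives Hitchin's equation $R^\nabla=-[\theta,\theta^*]$, while the $(2,0)$ piece together with $\theta\wedge\theta=0$ yields $\nabla^{1,0}\theta=0$, and its $h$-adjoint gives $\nabla^{1,0}\theta^*=0$.

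For the K\"ahler property, the Hodge two-form is globally $\omega=i\operatorname{tr}_h(\theta\wedge\theta^*)$. By the Leibniz rule for the Chern connection on $\operatorname{End}(H)$ and cyclicity of the trace,
\[
\partial\omega=i\operatorname{tr}\bigl(\nabla^{1,0}\theta\wedge\theta^*-\theta\wedge\nabla^{1,0}\theta^*\bigr)=0,
\]
and similarly $\bar\partial\omega=0$, so $\omega$ is closed.

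Next I would compute the holomorphic bisectional curvature via Griffiths's subbundle curvature formula: since $g_B$ is the metric induced on $\theta(TB)\subset\operatorname{End}(H)$, its Chern curvature is dominated in the bisectional pairing by the restriction of the ambient curvature (the second fundamental form contributes a nonpositive term). Writing $X=\theta(u)$, $Y=\theta(v)$, Hitchin's identity gives $R^{\operatorname{End}(H)}(u,\bar u)\phi=-[[X,X^*],\phi]$, so a short trace manipulation yields
\[
\langle R^{\operatorname{End}(H)}(u,\bar u)Y,Y\rangle_{HS}=-\operatorname{tr}\bigl([X,X^*][Y,Y^*]\bigr).
\]
Since $\theta\wedge\theta=0$ forces $[X,Y]=0$, a cyclic trace calculation under this commutation identifies the right-hand side with $-\|[X^*,Y]\|_{HS}^2\le 0$, proving semi-negativity.

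For the $k$-nilpotent case, specializing $v=u$ gives $\operatorname{Sec}(u)\le-\|[X,X^*]\|_{HS}^2/\|X\|_{HS}^4$. Decomposing $X$ along the grading furnished by the nilpotency structure, the diagonal blocks of $[X,X^*]$ are differences $X_{p+1}X_{p+1}^*-X_p^*X_p$; combining the inequality $\|M\|_{HS}^2\ge(\operatorname{tr}M)^2/\dim$ for Hermitian $M$ with a weighted Cauchy--Schwarz (weights $\dim H^p$) and a second Cauchy--Schwarz over the at most $k$ graded pieces yields $\|X\|_{HS}^4\le k^2\operatorname{Rank}(H)\cdot\|[X,X^*]\|_{HS}^2$, which is the desired bound. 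The main obstacle is the bisectional step: reducing $\operatorname{tr}([X,X^*][Y,Y^*])$ to $\|[X^*,Y]\|_{HS}^2$ really uses $\theta\wedge\theta=0$ through Hitchin's equation, and correctly separating the ambient and second-fundamental contributions in Griffiths's formula requires care. Once this identity is in hand, the K\"ahler property and the nilpotent estimate are routine consequences of the parallelism relations and standard Cauchy--Schwarz.
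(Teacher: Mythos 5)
Your proposal follows essentially the same route as the paper's proof: the K\"ahler property from the vanishing of the $(2,0)$-part of the Higgs curvature, semi-negativity of the bisectional curvature from Griffiths's subbundle curvature formula combined with the $(1,1)$-part of flatness ($\Theta^h=-[\theta,\theta^*]$) and the commutation $[\theta_u,\theta_v]=0$ to rewrite $-\operatorname{tr}([X,X^*][Y,Y^*])$ as $-\|[X^*,Y]\|^2_{HS}$, and the sectional bound from the block traces of $[X,X^*]$ along the nilpotent grading. Two small corrections to your write-up: $\nabla^{1,0}\theta^*=0$ is the $h$-adjoint of $\bar\partial\theta=0$ (the adjoint of $\nabla^{1,0}\theta=0$ gives only $\nabla^{0,1}\theta^*=0$), and the final inequality $\sum_p |a_p-a_{p-1}|\geq k^{-1}\sum_p a_p$ with $a_p=\operatorname{tr}(X_p^*X_p)$ is a telescoping (total-variation) bound exploiting $a_{-1}=a_k=0$, not a Cauchy--Schwarz.
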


The known proofs for the variation of Hodge structure case depend on the curvature property of the locally homogeneous complex manifold (see \cite{GS69}) and  the curvature formula for the subbundle. In this paper, we shall show in the third section that the curvature formula for the subbundle will be enough to prove our main theorem (this idea has also been used by Simpson for other purpose, see page 27 in \cite{Simpson92}).

\section{Basic notions on the Higgs bundle}

\begin{definition}[Higgs bundle, by Simpson \cite{Simpson92}] \label{de:higgs} A Higgs bundle is a pair $(H,\theta)$, where $H$ is a holomorphic vector bundle over a complex manifold $B$ and $\theta$ is an ${\rm End}(H)$-valued  holomorphic one form such that $\theta^2\equiv0$ on $B$.  
\end{definition}

\textbf{Remark: associated bundle map}. By using the Higgs field $\theta$, one may define a holomorphic bundle map from the tangent bundle $T_B$ of $B$ to ${\rm End}(H)$ as follows
\begin{equation}
\eta: v\mapsto v\lrcorner ~\theta:= \theta_v, \  v\in T_B.
\end{equation}
Then we have
\begin{equation}\label{eq:theta20}
\theta^2\equiv 0 \ \text{iff}\ [\theta_v, \theta_w]\equiv 0, \ \forall \ v, w\in T_B.
\end{equation}
We shall also use the following definition:

\begin{definition}[Admissible Higgs bundle]\label{de:ad-higgs} A Higgs bundle $(H,\theta)$ is said to be \textbf{admissible} if the associated bundle map $\eta$ is an injection from $T_B$ to ${\rm End}(H)$.
\end{definition}

\textbf{Example: Higgs bundle associated to a family of holomorphic vector bundles}. It is known that there is a natural Higgs bundle structure on the base manifold of a proper holomorphic fibration. More precisely, let $\pi$ be a proper holomorphic submersion from a complex manifold $\mathcal X$ to a complex manifold $B$ with connected fibres $X_t:=\pi^{-1}(t)$. Let us denote by
\begin{equation}
\kappa: v\mapsto \kappa(v) \in H^{0,1}(T_{X_t}).
\end{equation}
the Kodaira-Spencer map. Let $E$ be a holomorphic vector bundle over the total space $\mathcal X$. If the following direct image sheaves 
\begin{equation}
\mathcal H_E^k :=\oplus_{p+q=k} \mathcal H^{p,q}_E, \ \mathcal H^{p,q}_E:=R^q \pi_*\mathcal O(E\otimes \wedge^p T^*_{\mathcal X/B}).
\end{equation}
are locally free then $\kappa$ defines a natural holomorphic bundle map, say $\eta$, from $T_B$ to ${\rm End}(\mathcal H^k_E)$ (see \cite{BPW16}) such that $\eta(v) (\mathcal H^{p,q}_E)\subset \mathcal H^{p-1,q+1}_E$. Moreover, we have (see \cite{BPW16} or the appendix):
\begin{equation}
[\eta(v), \eta(w)]\equiv 0,
\end{equation}
which implies that the ${\rm End}(\mathcal H^k_E)$-valued holomorphic one form, say $\theta$,  associated to $\eta$ satisfies that $\theta^2\equiv 0$. Thus $(\mathcal H^k_E, \theta)$ is a Higgs bundle over the base manifold $B$. In case $E$ is flat and the total space $\mathcal X$ is K\"ahler, we know that (see \cite{G84}) $\eta$ is just the \textbf{differential of the period map}. Thus in that case, $(\mathcal H^k_E, \theta)$ is admissible iff the period map is an immersion. 

\medskip

\textbf{Remark: associated Hermitian metric on the base manifold $B$}.  If $(H,\theta)$ is an admissible Higgs bundle then $T_B$ can be seen as a holomorphic subbundle of ${\rm End}(H)$. Let us fix a smooth Hermitian metric, say $h$, on $H$. If $(H,\theta)$ is admissible then one may define a natural Hermitian structure on the subbundle $T_B$ of ${\rm End}(H)$ as follows:
\begin{equation}\label{eq:Hodge-higgs}
(v, w)_H:= (\theta_v,\theta_w)_{h\otimes h^*}, \ \ \forall \ v,w\in T_B.
\end{equation}
here we identify ${\rm End}(H)$ with $H\otimes H^*$. If $B$ is the base manifold of a Calabi-Yau family then the above metric is just the Hodge metric introduced by Lu \cite{Lu99}. In general, we shall introduce the following definition:

\begin{definition}[Hodge metric]\label{de:Hodge-higgs} Let $(H,\theta)$ be an admissible Higgs bundle. Then we call the Hermitian metric  defined by \eqref{eq:Hodge-higgs} the \textbf{Hodge metric} on $B$.
\end{definition}

\textbf{Remark: relation with the Weil-Petersson metric}. In \cite{LS04}, Lu and Sun proved that the Hodge metric associated to a smooth family of projective Calabi-Yau manifolds (with fibre dimension less than $5$) can be written as a linear combination of the associated Weil-Petersson metric and its Ricci form. In \cite{FL05}, Fang and Lu further proved a very interesting formula connecting the Weil-Petersson metric, the BCOV torsion and the generalized Hodge metric associated to a general smooth family of projective Calabi-Yau manifolds.

\medskip

\textbf{Remark: Hodge semi-metric}. If $(H,\theta)$ is not admissible then  \eqref{eq:Hodge-higgs} defines a semi-metric on $B$, we call it the Hodge semi-metric.

\medskip

\textbf{Remark: Gauss-Manin connection}. If the total space $\mathcal X$ is K\"ahler and $(E,h)$ is flat then we know that there is a natural flat connection (so called Gauss-Manin connection, see \cite{G84}), say $D^H$, on $(\mathcal H^k_E, \theta)$. Moreover, it is known that (see \cite{BPW16} or the appendix)
\begin{equation}
D^H=D^h+\theta+\theta^*,
\end{equation}
where $D^h:= \dbar+\partial^h$ denotes the Chern connection (with respect to the natural $L^2$-metric on $\mathcal H^k_E$) on the Higgs bundle $\mathcal H^k_E$ and $\theta^*$ denotes the adjoint of $\theta$. If we write $\theta$ locally as
\begin{equation}\label{eq:theta_j}
\theta=\sum dt^j\otimes \theta_j, \ \theta_j:= \partial/\partial t^j \lrcorner~ \theta.
\end{equation}
Then each $\theta_j$ is a local holomorphic section of ${\rm End} (E)$. Let $\theta_j^*$ be the adjoint of $\theta_j$ with respect to $h$. Then we have $\theta^*:=\sum d\bar t^j\otimes \theta_j^*$.
In general, we shall define:

\begin{definition}[Flat Higgs bundle, by Simpson \cite{Simpson92}]\label{de:GGM} Let $(H,\theta)$ be a Higgs bundle with a smooth Hermitian metric $h$. We call $D^H:=D^h+\theta+\theta^*$ the \textbf{Higgs connection} on $H$. We call $(H,\theta,h)$ a \textbf{flat Higgs bundle} if the Higgs curvature $\Theta^H:=(D^H)^2\equiv 0$ on $B$.
\end{definition}

From the above definition, we know that if the total space $\mathcal X$ is K\"ahler and $(E,h)$ is flat then for each $k$, $(\mathcal H^k_E, \theta)$ is a flat Higgs bundle.

\medskip

In order to state our main theorem, we shall also define the nilpotent Higgs bundle. Let $(H,\theta)$ be a Higgs bundle. By iteration, one may also define $\eta^j$ as a bundle map from $\otimes^j T_B$ to ${\rm End}(H)$, i.e.
\begin{equation}
\eta^j(v_1\otimes \cdots\otimes v_j):=\theta_{v_1}\circ\cdots \circ \theta_{v_j}.
\end{equation}
If $H=\mathcal H^k_E$ then each $\eta^j$ is just the iterated Kodaira-Spencer action (see \cite{BPW16} or the appendix) and $\eta^{k+1}\equiv 0$. In general, we shall introduce the following notation:

\begin{definition}\label{de:nilpotent-higgs} Let $k$ be a natural number. A Higgs bundle $(H,\theta)$ is said to be $k$-nilpotent if the associated bundle map satisfies $\eta^{k+1}\equiv 0$. 
\end{definition}

\textbf{Remark: relation with a system of Hodge bundles}. If $(H,\theta)$ is a system of Hodge bundles (see page 44 in \cite{Simpson92}) then it is  $k$-nilpotent for some $k$. In particular, $(\mathcal H^k_E, \theta)$ is $k$-nilpotent. 

\section{Curvature of the Hodge metric}

We shall prove Theorem \ref{th:main} in this section. The K\"ahler part follows from the following proposition:

\begin{proposition} The fundamental form of the Hodge semi-metric associated to a Higgs bundle $(H,\theta)$ is $d$-closed if the Higgs curvature of $(H,\theta)$ has no $(2,0)$-part.
\end{proposition}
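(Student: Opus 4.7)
The plan is to verify $d\omega=0$ by direct computation in local holomorphic coordinates $t^1,\dots,t^n$ on $B$. Writing $\theta=\sum_j dt^j\otimes\theta_j$ as in \eqref{eq:theta_j}, the Hodge semi-metric has local components $g_{j\bar k}:=(\theta_j,\theta_k)_{h\otimes h^*}$, so its fundamental form is $\omega=\sqrt{-1}\sum g_{j\bar k}\,dt^j\wedge d\bar t^k$. Since $\omega$ is real of bidegree $(1,1)$, the identity $d\omega=0$ reduces to $\partial\omega=0$, and in coordinates this is the symmetry
$$\frac{\partial g_{j\bar k}}{\partial t^l}=\frac{\partial g_{l\bar k}}{\partial t^j},\qquad \forall\, j,k,l.$$

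Next I would use that each $\theta_k$ is holomorphic ($\bar\partial\theta_k=0$) and that the Chern connection $D^h=\partial^h+\bar\partial$ on ${\rm End}(H)$ is compatible with $h\otimes h^*$. A short computation in a holomorphic frame (applying the characterizing property of the Chern connection to the second slot) gives
$$\frac{\partial g_{j\bar k}}{\partial t^l}=\bigl(\partial^h_{\partial/\partial t^l}\theta_j,\,\theta_k\bigr)_{h\otimes h^*}.$$
Hence the desired symmetry is implied by the stronger identity $\partial^h_{\partial_l}\theta_j=\partial^h_{\partial_j}\theta_l$, i.e., by the vanishing of the ${\rm End}(H)$-valued $(2,0)$-form $\partial^h\theta$.

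The final step extracts $\partial^h\theta=0$ from the hypothesis on $\Theta^H$. Decompose $D^H=(\partial^h+\theta)+(\bar\partial+\theta^*)$ into its $(1,0)$ and $(0,1)$ pieces and recall that $(\partial^h)^2=0$, that $\bar\partial\theta=0$ by holomorphy, and that $\partial^h\theta^*=(\bar\partial\theta)^*=0$ by the standard compatibility between the Chern connection and the Hermitian adjoint. The bidegree-$(2,0)$ part of $\Theta^H=(D^H)^2$ is then
$$(\Theta^H)^{2,0}=\partial^h\theta+\theta\wedge\theta.$$
By Definition \ref{de:higgs} we have $\theta^2=\theta\wedge\theta\equiv0$, so the assumption that $\Theta^H$ has no $(2,0)$-part is exactly $\partial^h\theta=0$.

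Combining the three steps gives $\partial\omega=0$, and the reality of $\omega$ then yields $d\omega=0$. The only mildly subtle point is the bidegree bookkeeping in the third step, where one must confirm that the mixed terms $\theta\wedge\theta^*+\theta^*\wedge\theta$ contribute only to the $(1,1)$-part and that $\partial^h\theta^*$ vanishes; apart from these routine identities the argument is essentially an application of metric compatibility of the Chern connection.
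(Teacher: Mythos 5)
Your proof is correct and is essentially the paper's argument written out in local coordinates: the paper computes $\partial\omega=i\{\partial^{{\rm End}(H)}\theta,\theta\}$ invariantly and identifies $\partial^{{\rm End}(H)}\theta=[\partial^h,\theta]$ with the $(2,0)$-part of the Higgs curvature (using $\theta^2=0$), exactly as you do componentwise via the symmetry $\partial g_{j\bar k}/\partial t^l=\partial g_{l\bar k}/\partial t^j$. The only cosmetic difference is your extra remark about $\partial^h\theta^*$, which in fact contributes only to the $(1,1)$-part and is not needed.
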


\begin{proof} By definition, we know that the fundamental form of the Hodge semi-metric associated to $(H,\theta)$ can be written as
\begin{equation}
\omega=i\{ \theta, \theta\},
\end{equation}
where $\{\cdot, \cdot\}$ denotes the sesquilinear product on ${\rm End}(H)$. Since $\theta$ is holomorphic, we have
\begin{equation}
\partial\omega=i \{\partial^{{\rm End}(H)} \theta, \theta\},
\end{equation}
where $\partial^{{\rm End}(H)} $ denotes the $(1,0)$-part of the Chern connection on ${\rm End}(H)$. Since ${\rm End}(H) \simeq H\otimes H^*$, by definition of the Chern connection on the dual bundle and the tensor product bundle, we know that
\begin{equation}
\partial^{{\rm End}(H)} \theta=[\partial^h, \theta].
\end{equation}
Notice that $[\partial^h, \theta]$ is just the $(2,0)$-part of the Higgs curvature. The proof is complete.
\end{proof}

Now let us prove the second part of Theorem \ref{th:main}. By Definition \ref{de:ad-higgs}, if $(H,\theta)$ is an admissible Higgs bundle then the associated bundle map
\begin{equation}
\eta: T_B\to {\rm End}(H), \ \ v\mapsto \theta_v,
\end{equation}
defines a holomorphic subbundle structure on $T_B$ and the Hodge metric is just the induced metric on $T_B$.  Let us denote by $D^{T_B}$ the Chern connection on $T_B$ with respect to the Hodge metric. Locally one may write 
\begin{equation}
D^{T_B}=\sum dt^j\otimes D^{T_B}_j +\sum d\bar t^j\otimes \dbar_{t^j}, \ \Theta_{j\bar k}^{T_B}:=[D^{T_B}_j, \dbar_{t^k}].
\end{equation}
By the curvature formula for the subbundle, we have
\begin{equation}\label{eq:curvature-123}
(\Theta_{j\bar k}^{T_B} v, w)_H=(\Theta_{j\bar k}^{{\rm End}(H)} \theta_v, \theta_w)- \left( P^{\bot} (D^{{\rm End}(H)}_j \theta_v), P^{\bot} (D^{{\rm End}(H)}_k \theta_w) \right),
\end{equation}
where $\Theta_{j\bar k}^{{\rm End}(H)}$ are the Chern-curvature operators on ${\rm End}(H)$ and $P^{\bot}$ denotes the orthogonal projection to the orthogonal complement of $T_B$ in ${\rm End}(H)$. We shall show that \eqref{eq:curvature-123} implies the following proposition:

\begin{proposition} Let $(H, \theta)$ be a flat admissible Higgs bundle over a complex manifold $B$. Then we have
\begin{equation}\label{eq:Griffiths}
\sum (\Theta_{j\bar k}^{T_B} v, v)_H\xi^j\bar\xi^k \leq 0, \  \forall \ v\in T_B, \ \xi\in \C^{\dim B},
\end{equation}
and 
\begin{equation}\label{eq:less-Nakano}
\sum(\Theta_{j\bar k}^{T_B} \partial/\partial t^k, \partial/\partial t^j)_H \leq 0.
\end{equation}
\end{proposition}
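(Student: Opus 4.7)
The plan is to combine the subbundle curvature formula \eqref{eq:curvature-123} with the algebraic consequences of flatness so as to write the right-hand sides of both \eqref{eq:Griffiths} and \eqref{eq:less-Nakano} as manifestly non-positive expressions. First I would unpack $\Theta^H\equiv 0$: writing $D^H=D^h+\theta+\theta^*$ and extracting the $(1,1)$-part of $(D^H)^2=0$ yields the Higgs-flatness identity
\[
\Theta^h_{j\bar k}=-[\theta_j,\theta_k^*].
\]
Since the Chern curvature on ${\rm End}(H)\simeq H\otimes H^*$ acts on an endomorphism by commutator, one obtains $\Theta^{{\rm End}(H)}_{j\bar k}(\phi)=[\Theta^h_{j\bar k},\phi]=-[[\theta_j,\theta_k^*],\phi]$. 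The second crucial input is $\theta^2\equiv 0$, which by \eqref{eq:theta20} means $[\theta_j,\theta_k]=0$ for all $j,k$.

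For \eqref{eq:Griffiths}, I would contract \eqref{eq:curvature-123} with $\xi^j\bar\xi^k$ and set $w=v$. With the shorthand $A:=\theta_\xi=\sum\xi^j\theta_j$, the curvature term equals $-([[A,A^*],\theta_v],\theta_v)_{h\otimes h^*}=-{\rm tr}([[A,A^*],\theta_v]\,\theta_v^*)$. Expanding the double commutator and using trace cyclicity together with the key relation $[A,\theta_v]=0$ (and its adjoint $[A^*,\theta_v^*]=0$), a short algebraic rearrangement identifies this expression with $-|[A^*,\theta_v]|^2=-|[\theta_\xi^*,\theta_v]|^2$. The second term in \eqref{eq:curvature-123} is $\bigl|\sum_j\xi^j P^{\bot}(D^{{\rm End}(H)}_j\theta_v)\bigr|^2\geq 0$, so subtracting gives
\[
\sum(\Theta^{T_B}_{j\bar k}v,v)_H\xi^j\bar\xi^k=-|[\theta_\xi^*,\theta_v]|^2-\Bigl|\sum_j\xi^j P^{\bot}(D^{{\rm End}(H)}_j\theta_v)\Bigr|^2\leq 0.
\]

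For \eqref{eq:less-Nakano}, I would substitute $v=\partial/\partial t^k$, $w=\partial/\partial t^j$ and sum over $j,k$. In the curvature term, expanding $[[\theta_j,\theta_k^*],\theta_k]$ and using $[\theta_j,\theta_k]=0$ collapses it to $[\theta_j,[\theta_k^*,\theta_k]]$; summing over $k$ produces the Hermitian operator $M:=\sum_k[\theta_k^*,\theta_k]$, and a cyclicity computation yields $\sum_{j,k}(\Theta^{{\rm End}(H)}_{j\bar k}\theta_k,\theta_j)=-|M|^2$. For the projection term, flatness forces the $(2,0)$-part of the Higgs curvature, namely $\partial^{{\rm End}(H)}\theta=[\partial^h,\theta]$, to vanish, hence $D^{{\rm End}(H)}_j\theta_k=D^{{\rm End}(H)}_k\theta_j$; the cross term therefore collapses to $\sum_{j,k}|P^{\bot}(D^{{\rm End}(H)}_j\theta_k)|^2\geq 0$. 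Combining the two pieces yields \eqref{eq:less-Nakano}.

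The only substantive work is the algebraic identity $(\Theta^{{\rm End}(H)}(\xi,\bar\xi)\theta_v,\theta_v)_{h\otimes h^*}=-|[\theta_\xi^*,\theta_v]|^2$ in the Griffiths step and its traced analogue in the Nakano-type step; neither is conceptually deep, and both reduce to elementary trace manipulations where the Higgs commutativity $[A,\theta_v]=0$ is exactly what is needed to rewrite the expanded double commutator as the norm-square of a single commutator. Nothing beyond flatness and $\theta^2\equiv 0$ is ever invoked, which is the Higgs-bundle incarnation of the K\"ahler identities that drive the classical variation-of-Hodge-structure argument.
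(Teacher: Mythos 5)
Your proposal is correct and follows essentially the same route as the paper: the subbundle curvature formula \eqref{eq:curvature-123}, the commutator action of $\Theta^{{\rm End}(H)}_{j\bar k}$ via $\Theta^h_{j\bar k}=[\theta_k^*,\theta_j]$ from flatness, the Jacobi/trace manipulation using $[\theta_j,\theta_k]=0$ to produce $-\|[\theta_\xi^*,\theta_v]\|^2$, and the symmetry $[\partial^h_j,\theta_k]=[\partial^h_k,\theta_j]$ from the vanishing $(2,0)$-part to make the projection cross-term a sum of squares. The only difference is cosmetic (contracting with $\xi$ first and working with $A=\theta_\xi$ rather than index by index).
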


\begin{proof} Since ${\rm End}(H)\simeq H\otimes H^*$, by the curvature formula of the dual bundle and the tensor product bundle, we have
\begin{equation}
\Theta_{j\bar k}^{{\rm End}(H)} \theta_v=[\Theta^h_{j\bar k}, \theta_v],
\end{equation}
where $\Theta^h=(D^h)^2$ denotes the Chern connection on $(H,h)$. Thus by \eqref{eq:curvature-123}, we have
\begin{equation}\label{eq:curvature-1234}
(\Theta_{j\bar k}^{T_B} v, w)_H=([\Theta^h_{j\bar k}, \theta_v], \theta_w)- \left( P^{\bot} (D^{{\rm End}(H)}_j \theta_v), P^{\bot} (D^{{\rm End}(H)}_k \theta_w) \right).
\end{equation}
Since $(H, \theta)$ is flat, we have $\Theta^h_{j\bar k}=[\theta_k^*, \theta_j]$ (see \eqref{eq:theta_j} for the definition of $\theta_j$). Moreover, $\theta^2\equiv 0$ imples that $[\theta_v, \theta_j] \equiv 0$. Thus we have
\begin{equation}
[\Theta^h_{j\bar k}, \theta_v]=[[\theta_k^*, \theta_j], \theta_v]=[[\theta_k^*, \theta_v], \theta_j].
\end{equation}
Notice that
\begin{eqnarray*}
([[\theta_k^*, \theta_v], \theta_j], \theta_w) & = & \left([\theta_k^*, \theta_v]\theta_j-\theta_j[\theta_k^*, \theta_v], \theta_w\right) \\
& = & ([\theta_k^*, \theta_v], \theta_w \theta_j^*-\theta_j^*\theta_w) \\
& = & -([\theta_k^*, \theta_v], [\theta_j^*, \theta_w]).
\end{eqnarray*}
Thus we have
\begin{equation}
(\Theta_{j\bar k}^{T_B} v, w)_H= -([\theta_k^*, \theta_v], [\theta_j^*, \theta_w])- \left( P^{\bot} (D^{{\rm End}(H)}_j \theta_v), P^{\bot} (D^{{\rm End}(H)}_k \theta_w) \right),
\end{equation}
which implies \eqref{eq:Griffiths} and 
\begin{equation}
\sum(\Theta_{j\bar k}^{T_B} \partial/\partial t^k, \partial/\partial t^j)_H= -||\sum [\theta_k^*, \theta_k]||^2- \left( P^{\bot} (D^{{\rm End}(H)}_j \theta_k), P^{\bot} (D^{{\rm End}(H)}_k \theta_j) \right).
\end{equation}
But notice that $D^{{\rm End}(H)}_j \theta_k=[\partial^h_j, \theta_k]$ and flat-ness of our Higgs bundle implies that $[\partial^h_j, \theta_k]=[\partial^h_k, \theta_j]$. Thus
\begin{equation}\label{eq:thetajkjk}
\sum(\Theta_{j\bar k}^{T_B} \partial/\partial t^k, \partial/\partial t^j)_H= -||\sum [\theta_k^*, \theta_k]||^2-\sum ||P^{\bot} [\partial^h_j, \theta_k]||^2,
\end{equation}
which implies \eqref{eq:less-Nakano}. The proof is complete.
\end{proof}

\textbf{Remark}: \eqref{eq:Griffiths} implies the second part of Theorem \ref{th:main}.

\medskip

Now let us prove the following proposition, which implies the last part of Theorem \ref{th:main}.

\begin{proposition} Let $(H, \theta)$ be a flat admissible $k$-nilpotent Higgs bundle over a complex manifold $B$. Then for each $j$,
\begin{equation}\label{eq:sectional-curvature}
(\Theta_{j\bar j}^{T_B} \partial/\partial t^j, \partial/\partial t^j)_H \leq -(k^2{\rm Rank} (H))^{-1} ||\partial/\partial t^j||_H^4.
\end{equation}
\end{proposition}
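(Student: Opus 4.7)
The plan is to reduce \eqref{eq:sectional-curvature} to a single pointwise linear-algebraic inequality on each fiber. Specializing \eqref{eq:curvature-1234} to $k=j$ and $v=w=\partial/\partial t^j$, the computation of the previous proposition gives
\[
(\Theta_{j\bar j}^{T_B}\partial/\partial t^j,\partial/\partial t^j)_H = -||[\theta_j^*,\theta_j]||^2 - ||P^{\bot}[\partial^h_j,\theta_j]||^2 \leq -||[\theta_j^*,\theta_j]||^2.
\]
By the $k$-nilpotent hypothesis applied with all arguments equal to $\partial/\partial t^j$, the endomorphism $A:=\theta_j$ satisfies $A^{k+1}=0$ on every fiber. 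Thus it suffices to prove the following pointwise estimate: for any $A\in{\rm End}(V)$ on an $n$-dimensional Hermitian space $V$ with $A^{k+1}=0$,
\[
||[A^*,A]||^2 \geq \frac{||A||^4}{k^2\,n},
\]
where $||\cdot||$ is the Hilbert--Schmidt norm. Here $n={\rm Rank}(H)$ and $||A||^2=||\partial/\partial t^j||_H^2$ by the definition of the Hodge metric.

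For this linear-algebra lemma, I would exploit the kernel filtration $V_\ell:=\ker A^\ell$ for $\ell=0,\dots,k+1$, which satisfies $V_0=0$ and $V_{k+1}=V$. Choose an orthogonal splitting $V=\bigoplus_{\ell=1}^{k+1}H_\ell$ compatible with this filtration (so $V_\ell=H_1\oplus\cdots\oplus H_\ell$); let $n_\ell=\dim H_\ell$. Because $A(V_j)\subseteq V_{j-1}$, in this basis $A$ is strictly block upper triangular: $A=\sum_{i<j}A_{ij}$ with $A_{ij}\colon H_j\to H_i$. Setting $a_{ij}:=||A_{ij}||^2$, we have $||A||^2=\sum_{i<j}a_{ij}$, and a short computation gives the trace of the $j$th diagonal block
\[
T_j:={\rm tr}(Q_j[A^*,A]Q_j)=\sum_{i<j}a_{ij}-\sum_{i>j}a_{ji},
\]
$Q_j$ being the projection onto $H_j$.

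The estimate then comes from three inequalities chained together. Orthogonality of the block decomposition together with $|{\rm tr}\,X|^2\leq n_\ell\,||X||^2$ gives $||[A^*,A]||^2\geq\sum_\ell T_\ell^2/n_\ell$; one more Cauchy--Schwarz yields $\sum_\ell T_\ell^2/n_\ell\geq(\sum_\ell|T_\ell|)^2/n$. The remaining ingredient $\sum_\ell|T_\ell|\geq 2||A||^2/k$ follows from the identity $\sum_\ell \ell\,T_\ell=\sum_{i<j}(j-i)\,a_{ij}\geq||A||^2$ combined with the trace-of-commutator constraint $\sum_\ell T_\ell=0$, by recentering the weights $\ell$ about $c=(k+2)/2$ so that $|\ell-c|\leq k/2$. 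Chaining the three gives $||[A^*,A]||^2\geq 4||A||^4/(k^2 n)$, which is stronger than what is required.

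The main obstacle is the quantitative linear-algebra lemma; the geometric machinery (the curvature formula \eqref{eq:curvature-1234}, flatness, admissibility, nilpotency) is already in place and only needs to be specialized. The key observation is that $A^{k+1}=0$ supplies both a filtration of depth $k+1$ and the conservation law $\sum_\ell T_\ell=0$ for the ``net flows'' $T_\ell$; these two facts together force $\sum|T_\ell|$ to be at least of order $||A||^2/k$, which is exactly what produces the factor $k^2$ in the denominator of the final bound.
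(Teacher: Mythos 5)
Your proposal is correct, and at the top level it follows the same strategy as the paper: use \eqref{eq:thetajkjk} to reduce \eqref{eq:sectional-curvature} to the pointwise inequality $\|[A^*,A]\|^2\geq c\,\|A\|^4/(k^2\,{\rm Rank}(H))$ for the nilpotent endomorphism $A=\theta_j(t)$, and then prove that by decomposing the fibre and bounding $\|[A^*,A]\|$ from below by the traces of its diagonal blocks. Where you genuinely diverge is in the choice of decomposition and the combinatorial lemma. The paper takes the grading coming from the Jordan normal form, $V=\oplus_{0\leq p\leq k}V_p$ with $A(V_p)\subset V_{p+1}$, so the diagonal-block traces telescope to $a_p-a_{p-1}$, and it concludes from $\sum_p|a_p-a_{p-1}|\geq\max_p a_p\geq\frac1k\sum_p a_p$. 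You instead take the kernel filtration $V_\ell=\ker A^\ell$ with orthogonal complements, accept that $A$ is then only strictly block-triangular (blocks $A_{ij}$ for all $i<j$, not just adjacent ones), and extract the factor $1/k$ from the conservation law $\sum_\ell T_\ell={\rm tr}[A^*,A]=0$ combined with $\sum_\ell\ell\,T_\ell\geq\|A\|^2$ via recentering of the weights. Your route buys two things. First, the inequality $\|[A^*,A]\|^2\geq\sum_\ell T_\ell^2/n_\ell$ really does require the decomposition to be orthogonal, and in general one cannot have both orthogonality and the adjacent-only property $A(V_p)\subset V_{p+1}$ simultaneously (e.g.\ $Ae_3=e_1+e_2$, $Ae_2=e_1$, $Ae_1=0$ in an orthonormal basis); the paper's ``direct computation'' glosses over this, whereas your strictly-triangular formulation handles the general case cleanly. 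Second, your recentered estimate $\sum_\ell|T_\ell|\geq 2\|A\|^2/k$ yields the constant $4/(k^2\,{\rm Rank}(H))$, a factor of $4$ better than the stated bound, so your argument proves a slightly stronger version of the proposition.
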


\begin{proof} By \eqref{eq:thetajkjk}, we have
\begin{equation}
(\Theta_{j\bar j}^{T_B} \partial/\partial t^j, \partial/\partial t^j)_H \leq  -||[\theta_j^*, \theta_j]||^2
\end{equation}
Fix $t\in B$. Then $A:=\theta_j(t)$ is a $\mathbb C$-linear transform from the fibre $V:=H_t$ to itself. Since $(H, \theta)$ is $k$-nilpotent, we know that $A^{k+1}=0$. Using the Jordan normal form of $A$, we know that $V$ can be written as a direct sum of $(k+1) \ \mathbb C$-linear subspaces,
\begin{equation}
V=\oplus_{0\leq p\leq k} V_p, 
\end{equation}
such that
\begin{equation}
A(V_p)\subset V_{p+1}, \ A(V_k)=\{0\}.
\end{equation}
For each $0\leq p\leq k-1$, let us denote the following $\mathbb C$-linear map
\begin{equation}
A: V_p \to V_{p+1},
\end{equation}
by $A_p$. Put $A_{-1}=A_k=0$. By a direct computation, we have
\begin{equation}
|| [\theta^*_j, \theta_j]|| \geq \frac{\sum_{0\leq p\leq k} |{\rm Trace}(A_p^* A_p)-{\rm Trace}(A_{p-1}^* A_{p-1})|}{\sqrt{{\rm Rank}(H)}}.
\end{equation}
Put $a_p={\rm Trace}(A_p^* A_p)$. Then each $a_p$ is non-negative, and
\begin{equation}
\sum  |a_p-a_{p-1}| =a_0+\sum_{1\leq p\leq k-1} |a_p-a_{p-1}| +a_{k-1}\geq \max\{a_0, \cdots, a_{k-1}\} \geq \frac1k \sum a_p.
\end{equation}
Moreover, by definition, we have
\begin{equation}
||\partial/\partial t^j||_H^2= \sum a_p.
\end{equation}
Thus \eqref{eq:sectional-curvature} is true.
\end{proof}

\section{Applications}

The following Griffiths-Schmid theorem (see \cite{GS69} and \cite{Lu99}) is a direct corollary of our main theorem.

\begin{theorem}
Let $\pi:\mathcal X\to B$ be a proper holomorphic submersion from a K\"ahler manifold $\mathcal X$ to a complex manifold $B$ with connected fibres $X_t$. Assume that the Higgs bundle 
\begin{equation*}
\mathcal H^k:=\oplus_{p+q=k} \{H^{p,q}(X_t)\}, \
\end{equation*}
is admissible (i.e., the associated period map is an immersion). Then the holomorphic sectional curvature of $B$ with respect to the Hodge metric is bounded above by a negative constant.
\end{theorem}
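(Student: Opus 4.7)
The plan is to deduce this as a direct application of Theorem \ref{th:main} by verifying that the Higgs bundle $\mathcal H^k=\oplus_{p+q=k}\{H^{p,q}(X_t)\}$ satisfies the three hypotheses of the theorem: flatness, admissibility, and $k$-nilpotency. Admissibility is given by assumption.

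First, I would identify $\mathcal H^k$ with $\mathcal H^k_E$ from the example in Section 2 by taking $E$ to be the trivial line bundle on $\mathcal X$ equipped with the flat metric $h\equiv 1$. Since $\mathcal X$ is assumed K\"ahler and $E$ is flat, the remark immediately preceding Definition \ref{de:GGM} applies and gives us that $(\mathcal H^k,\theta)$ is a flat Higgs bundle, where $\theta$ is the Higgs field built from the Kodaira-Spencer map.

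Second, since $(\mathcal H^k,\theta)$ carries a natural system of Hodge bundles structure (the Hodge decomposition) with $\theta(H^{p,q})\subset H^{p-1,q+1}$, the remark following Definition \ref{de:nilpotent-higgs} shows that $(\mathcal H^k,\theta)$ is $k$-nilpotent; indeed, iterating the Higgs field $k+1$ times decreases the holomorphic degree by $k+1$ and therefore produces zero since the Hodge filtration has only $k+1$ steps.

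Finally, with all three hypotheses verified and $r:=\mathrm{Rank}(\mathcal H^k)=\sum_{p+q=k}\dim H^{p,q}(X_t)$ finite, Theorem \ref{th:main} applies directly to conclude that the Hodge metric on $B$ is K\"ahler and that its holomorphic sectional curvature is bounded above by the negative constant $-(k^2 r)^{-1}$. There is no substantial obstacle here beyond carefully unwinding the definitions; the only point worth stressing in the write-up is that the admissibility assumption is equivalent to the period map being an immersion (as already noted in the example after Definition \ref{de:ad-higgs}), so that the statement matches the classical Griffiths-Schmid formulation.
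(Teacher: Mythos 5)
Your proposal is correct and follows exactly the route the paper intends: it presents this theorem as a direct corollary of Theorem \ref{th:main}, with flatness coming from the K\"ahler total space and trivial flat $E$ (the appendix's Griffiths theorem), $k$-nilpotency from the system-of-Hodge-bundles remark, and admissibility assumed, yielding the same effective bound $-(k^2 b_k)^{-1}$ noted in the paper's subsequent remark.
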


\textbf{Remark}:  In fact, our main theorem implies an effective version (see Theorem 4.3 in \cite{LS04} for the Calabi-Yau case) of Griffiths-Schmid theorem: the holomorphic sectional curvature of $B$ is no bigger than $-(k^2 b_k)^{-1}$, where $b_k:={\rm Rank}(\mathcal H^k)$ is the $k$-th Betti number of the fibres.

\medskip

\textbf{Remark: Calabi-Yau family}. In the above theorem, if we assume further that the canonical line bundle of each fibre is trivial then $\mathcal H^n$ ($n=\dim_{\mathbb C} X_t$) is admissible iff the Kodaira-Spencer map is injective. Thus the above theorem can be used to study the curvature property of the base manifold of a Calabi-Yau family (see \cite{Lu01}).

\section{Acknowledgement} 

I would like to thank Bo Berndtsson for many inspiring discussions and his comments on paper writing. I would also like to thank Zhiqin Lu for discussions  on the curvature properties of the base manifold of a Calaba-Yau family and his comments on this paper. Thanks are also given to Bo-Yong Chen and Qing-Chun Ji for their constant support and encouragement.

\section{Appendix: Higgs bundle associated to a family of holomorphic vector bundles}

In this section, we shall give a careful study of the example after Definition \ref{de:ad-higgs}. Let $\pi$ be a proper holomorphic submersion from a complex manifold $\mathcal X$ to a complex manifold $B$ with connected fibres $X_t:=\pi^{-1}(t)$. Let $E$ be a holomorphic vector bundle over the total space $\mathcal X$. Let $E_t$ be the restriction to the fibre, $X_t$, of $E$. Let $H^{p,q}(E_t)$ be the Dolbeault cohomology groups on $(X_t, E_t)$. Let us fix a positive integer $k$. We shall use the following assumption:

\medskip

\textbf{A}: The dimension of $H^{p,q}(E_t)$ does not depend on $t$ in $B$ for all $p,q$ such that $p+q=k$.

\medskip

By the base change theorem, we know that $\mathbf{A}$ implies that  for each $p,q$ with $p+q=k$, there is a holomorphic vector bundle structure on
\begin{equation}
\mathcal H^{p,q}_E:=\{H^{p,q}(E_t)\}_{t\in B},
\end{equation}
such that the sheaf of germs of holomorphic sections of $\mathcal H^{p,q}_E$ is just the $q$-th direct image sheaf $R^q \pi_*\mathcal O(E\otimes \wedge^p T^*_{\mathcal X/B})$. 
By Theorem 3.1 in \cite{Wang16}, there is also a direct way to construct the holomorphic vector bundle structure of $\mathcal H^{p,q}_E$ as follows:

\medskip

\textbf{Holomorphic vector bundle structure of $\mathcal H^{p,q}_E$}: Let $h$ be a smooth Hermitian metric on $E$. Let $\omega$ be a smooth Hermitian metric on the total space $\mathcal X$. Let $\dbar^t$ be the $\dbar$-operator on $X_t$. Then by the Hodge theory, every cohomology class in $H^{p,q}(E_t)$ has a unique representative in
\begin{equation}
\mathcal H^{p,q}_{E_t}:=\ker\dbar^t \cap \ker(\dbar^t)^*,
\end{equation}
where each $(\dbar^t)^*$ denotes the adjoint of $\dbar^t$ with respect to $\omega^t:=\omega|_{X_t}$ and $h^t:=h|_{E_t}$. Thus we can identify $H^{p,q}(E_t)$ with $\mathcal H^{p,q}_{E_t}$, and we shall write
\begin{equation}
\mathcal H^{p,q}_E:=\{\mathcal H^{p,q}_{E_t}\}_{t\in B}.
\end{equation}
By a theorem of Kodaira-Spencer (see Page 349 in \cite{KSp}), we know that $\mathbf A$ implies that $\mathcal H_E^{p,q}$ has a smooth complex vector bundle structure. More precisely, if $\mathbf A$ is true then for every $t_0\in B$, $u^{t_0}\in \mathcal H_{E_{t_0}}^{p,q}$, there is a smooth $E$-valued $(p,q)$-form, say $\mathbf u$, on $\mathcal X$ such that
\begin{equation}
i_{t_0}^* \mathbf u=u^{t_0}, \ i_t^*\mathbf u \in \mathcal H_{E_t}^{p,q}, \ \forall \ t\in B, 
\end{equation} 
where each $i_t$ denotes the inclusion map from $X_t$ to $\mathcal X$. Then the smooth vector bundle structure $\mathcal H_E^{p,q}$ can be defined as follows:

\begin{definition}\label{de:smooth} We call $u: t\mapsto u^t\in\mathcal H^{p,q}_{E_t}$ a smooth section of $\mathcal H_E^{p,q}$ if there exists a smooth $E$-valued $(p,q)$-form, say $\mathbf u$, on $\mathcal X$ such that $i_t^*\mathbf u=u^{t}$, $\forall \ t\in B$. And we call $\mathbf u$ a representative of $u$. 
\end{definition}

Let $\{t^j\}_{1\leq j\leq m}$ be a holomorphic local coordinate system on $B$. For a smooth section, say $u$, of $\mathcal H^{p,q}_E$, let us define
\begin{equation}
\dbar_{t^j} u:  t \to \mathbb H^t\left(i_t^* [\dbar, \delta_{\overline{V_j}}]  \mathbf{u}\right), \ \ [\dbar, \delta_{\overline{V_j}}]:=\dbar\delta_{\overline{V_j}}+ \delta_{\overline{V_j}}\dbar,  \ \ \delta_{\overline{V_j}}:= \overline{V_j}\lrcorner,
\end{equation}
where $\textbf{u}$ is an arbitrary representative of $u$, $\mathbb H^t$ denotes the orthogonal projection to $\mathcal H^{p,q}_{E_t}$ and $V_j$ is an arbitrary smooth $(1,0)$-vector field on $\mathcal X$ such that $\pi_*V_j=\partial/\partial t^j$. One may check that (see Theorem 3.1 in \cite{Wang16}) each $\dbar_{t^j} u$ is well defined (i.e. does not depend on the choice of $V_j$ and $\mathbf u$). Then we know that
\begin{equation}
D^{0,1}u :=\sum d\bar t^j \otimes \dbar_{t^j} u,
\end{equation}
can be seen as a $(0,1)$-component of a connection on $\mathcal H_E^{p,q}$. It is proved in  \cite{Wang16} that $D^{0,1}$ is integrable, i.e. $(D^{0,1})^2=0$. Thus by the vector bundle version of the Newlander-Nirenberg theorem, we know that $D^{0,1}$ defines a holomorphic vector bundle structure on $\mathcal H_E^{p,q}$. 

\medskip

\textbf{Higgs connection on $\mathcal H_E^k$}: Now we know that $\mathbf A$ implies that
\begin{equation}
\mathcal H^k_E:=\oplus_{p+q=k} \mathcal H^{p,q}_E,
\end{equation}
is a holomorphic vector bundle over our base manifold $B$. We claim that the Kodaira-Spencer map
\begin{equation}
\kappa: v\mapsto \kappa(v):=[i_t^*(\dbar V)]\in H^{0,1}(T_{X_t}), \ \forall \ v\in T_t B,
\end{equation}
(here $[i_t^*(\dbar V)]$ denotes the $\dbar$-cohomology class of the $\dbar$-closed $T_{X_t}$-valued $(1,0)$-form $i_t^*(\dbar V)$ on $X_t$ and $V$ is an arbitrary smooth $(1,0)$-vector field on $\mathcal X$ such that $\pi_*V=v$) can be used to define a holomorphic bundle map from $T_B$ to  ${\rm End} (\mathcal H^{p,q}_E)$. In fact, each Kodaira-Spencer class $\kappa(v)$ defines a natural Kodaira-Spencer action, say $\eta(v)$, such that $\eta(v)$ sends a cohomology class, say $[u]$, in $H^{p,q}(E_t)$ to a cohomology class $[i_t^*(\dbar V) u]$ in $H^{p-1,q+1}(E_t)$. On the harmonic space, we then have
\begin{equation}
\eta(v)u^t:=  \mathbb H^t\left(i_t^* [\dbar, \delta_{V}]  \mathbf{u}\right),
\end{equation}
where $u: t\mapsto u^t$ is a smooth section of $\mathcal H^{p,q}_E$, $\mathbf{u}$ is a representative of $u$ and $\mathbb H^t$ denotes the orthogonal projection to $\mathcal H^{p-1,q+1}_{E_t}$. Since the Kodaira-Spencer map is holomorphic, we know that 
\begin{equation}
\eta: v\mapsto \eta(v)\in {\rm End}(\mathcal H^k _E)
\end{equation}
is a holomorphic bundle map and  $[\eta(v), \eta(w)]=0$ (one may also prove this fact by computing the commutators of the Lie derivatives). Consider $\theta$ such that
\begin{equation}
v \lrcorner ~\theta:=\theta_v =\eta(v).
\end{equation}
Then $\theta$ is holomorphic and $\theta^2=0$. Thus $\theta$ defines a Higgs field on $\mathcal H^k _E$. Let us denote by $D^h$ the Chern connection on $\mathcal H^k _E$. Then the Higgs connection, say $D^H$, on $\mathcal H^k _E$ is just $D^h+\theta+\theta^*$. Let us write
\begin{equation}
D^h=\sum d\bar t^j\otimes \dbar_{t^j} +\sum dt^j \otimes \partial^h_{t^j}, \ \theta=\sum dt^j\otimes \theta_j, \ \theta^*:=\sum d\bar t^j\otimes \theta_j^*.
\end{equation}
The proof of Proposition 3.3 (a Lefschetz decomposition trick) in \cite{Wang16} implies the following proposition: 

\begin{proposition} If $\omega$ is a K\"ahler form on the total space $\mathcal X$ then for every smooth section $u$ (with representative $\mathbf u$) of $\mathcal H^k_E$, we have 
\begin{equation}
\partial^h_{t^j} u:  t \to (\partial^h_{t^j} u)(t)=\mathbb H^t(i_t^* [\partial^E, \delta_{V_j}]\mathbf{u}) , \ \ (\theta_j^* u)(t)= \mathbb H^t(i_t^* [\partial^E, \delta_{\overline{V_j}}]\mathbf{u}),
\end{equation}
where $\partial^E$ denotes the $(1,0)$-part of the Chern connection on $E$ and each $V_j$ denotes the unique $(1,0)$-vector field on $\mathcal X$ such that $\pi_* V_j =\partial/\partial t^j$ and $i_t^*(V_j \lrcorner ~ \omega)\equiv 0$ (i.e. each $V_j$ is the horizontal lift of $\partial/\partial t^j$ with respect to $\omega$).
\end{proposition}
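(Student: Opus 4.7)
The plan is to verify each formula against its defining property. For $\partial^h_{t^j}$, I will use that it is the unique $(1,0)$-operator making $D^h = \dbar + \partial^h$ compatible with the $L^2$ metric on $\mathcal H^k_E$; since $\dbar_{t^j}$ is already explicitly in hand, testing the identity $\partial_{t^j}(u,v) = (\partial^h_{t^j} u, v) + (u, \dbar_{t^j} v)$ pins down the $(1,0)$-part. For $\theta_j^*$, I will check pointwise adjointness with $\theta_j u = \mathbb H^t(i_t^*[\dbar, \delta_{V_j}]\mathbf u)$ on each harmonic space $\mathcal H^{p,q}_{E_t}$. The K\"ahler hypothesis $d\omega = 0$ on $\mathcal X$ and the normalization $i_t^*(V_j \lrcorner ~\omega) \equiv 0$ are the two ingredients that make the formulas take this clean shape.

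For the first identity, I would compute $\partial_{t^j}(u,v)(t)$ where $(u,v)(t) = \int_{X_t} i_t^* \Psi$ and $\Psi$ is the globally defined top-form pairing built from $\mathbf u$, $\mathbf v$, $h$, and $\omega$ on $\mathcal X$. Lifting the base derivative through the fibre integral by $V_j$ and applying Cartan's formula gives
\begin{equation*}
\partial_{t^j} (u,v)(t) = \int_{X_t} i_t^* [d, \delta_{V_j}] \Psi.
\end{equation*}
Splitting $d = \partial + \dbar$ and using the graded Leibniz rule for the commutator with $\delta_{V_j}$, the $\dbar$-contribution reduces to $(u, \dbar_{t^j} v)(t)$ via the known formula for $\dbar_{t^j}$ together with the horizontal lift condition $i_t^*(V_j\lrcorner~\omega) \equiv 0$ (which kills the error terms coming from the failure of $V_j$ to preserve $\omega$). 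The remaining $\partial$-contribution is then matched with $(\mathbb H^t(i_t^*[\partial^E, \delta_{V_j}]\mathbf u), v)(t)$, which by metric compatibility forces $\partial^h_{t^j} u$ to equal the claimed expression. For the second identity, I would work pointwise on a single fibre: the K\"ahler identity $(\partial^E|_{X_t})^* = -i[\Lambda^t, \dbar^t]$ combined with the horizontal lift condition allows one to convert $\mathbb H^t(i_t^*[\partial^E, \delta_{\overline{V_j}}] \mathbf u)$ into a form-adjoint of $\mathbb H^t(i_t^*[\dbar, \delta_{V_j}] \mathbf u) = \theta_j u$, and a direct pairing computation then yields the adjoint relation with respect to the $L^2$-inner product on $\mathcal H^{p,q}_{E_t}$.

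The hard part will be carefully tracking the error terms arising from two sources: first, $V_j$ is only smooth, not holomorphic, so $[\partial^E, \delta_{V_j}]$ and $[\partial^E, \delta_{\overline{V_j}}]$ do not restrict to the pure fibrewise operators; second, $\partial^E$ on $\mathcal X$ differs from the fibrewise $(1,0)$-Chern connection $\partial^E|_{X_t}$ by tangential contributions from the horizontal part of $V_j$. The Lefschetz decomposition trick of \cite{Wang16} is what controls these: expanding into primitive pieces on each fibre, the K\"ahler identities become pointwise exact, and the surplus terms produced by the non-holomorphicity of $V_j$ and by $L_{V_j}\omega$ are shown to be either $\dbar^t$-exact or $(\dbar^t)^*$-exact. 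Applying the harmonic projection $\mathbb H^t$ kills them, leaving precisely the two formulas claimed.
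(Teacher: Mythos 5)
Your outline matches the paper's approach: the paper gives no argument here beyond citing Proposition 3.3 of \cite{Wang16} and naming the ``Lefschetz decomposition trick,'' which is exactly the tool you invoke to absorb the error terms coming from the non-holomorphic lift $V_j$ and the $t$-dependence of the fibrewise metric. The additional structure you supply --- metric compatibility plus Cartan's formula under the fibre integral to identify $\partial^h_{t^j}$, and pointwise adjointness against $\theta_j$ via the fibrewise K\"ahler identities to identify $\theta_j^*$ --- is the standard and correct way to organize that computation.
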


Let us write our Higgs connection $D^H$ as 
\begin{equation}
D^H:=\sum d\bar t^j\otimes D_{\bar j} +\sum dt^j \otimes D_j.
\end{equation} 
By the above proposition, if $\mathcal X$ is K\"ahler then 
\begin{equation}
(D_{\bar j}u)(t) = \mathbb H^t(i_t^* [d^E, \delta_{\overline{V_j}}]\mathbf{u}), \  (D_{j}u)(t) = \mathbb H^t(i_t^* [d^E, \delta_{V_j}]\mathbf{u}),
\end{equation}
where $d^E:=\dbar+\partial^E$ is the Chern connection on $E$. Assume further that $\Theta^E=(d^E)^2=0$, i.e. $E$ is flat. Then each $\mathbb H^t$ is equal to the orthogonal projection to $\ker d^{E_t} \cap \ker (d^{E_t})^*$ (since by the Bochner-Kodaira-Nakano formula, in this case, each $\dbar^t$-harmonic space is equal to the $d^{E_t}$-harmonic space), which implies that
\begin{equation}
([D_j, D_{\bar k}]u)(t)=\mathbb H^t(i_t^* [[d^E, \delta_{V_j}], [d^E, \delta_{\overline{V_k}}]]\mathbf{u})=\mathbb H^t(i_t^* [[d^E, \delta_{V_j}], [d^E, \delta_{\overline{V_k}}]]\mathbf{u}).
\end{equation}
Since
\begin{equation}
[[d^E, \delta_{V_j}], [d^E, \delta_{\overline{V_k}}]]= [d^E, \delta_{[V_j, \overline{V_k}]}]+ \Theta^E(V_j, \overline{V_k}),
\end{equation}
and $\Theta^E=0$, we know that $[D_j, D_{\bar k}]u\equiv 0$. By a similar argument, we have  $[D_j, D_{k}]u\equiv 0$ and $[D_{\bar j}, D_{\bar k}]u\equiv 0$. Thus we get the following Griffiths theorem:

\begin{theorem} If $\mathcal X$ is K\"ahler and $E$ is flat then $(\mathcal H^k_E,\theta)$ is flat as a Higgs bundle.  Assume further that $E$ is trivial then the Higgs connection $D^{H}$ is just the Gauss-Manin connection.
\end{theorem}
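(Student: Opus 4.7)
My overall plan is to assemble the pieces developed in the preceding propositions into the curvature identity $(D^H)^2 = 0$. Since $D^H = \sum d\bar t^j \otimes D_{\bar j} + \sum dt^j \otimes D_j$, flatness of the Higgs bundle reduces to the vanishing of the three blocks $[D_j, D_k]$, $[D_{\bar j}, D_{\bar k}]$, and $[D_j, D_{\bar k}]$ on an arbitrary smooth section $u$ (with representative $\mathbf u$). I would then address the second assertion by comparing $D^H$ with the Gauss-Manin connection on a distinguished flat frame.

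For the mixed commutator I will follow the outline already sketched in the appendix. The graded Jacobi identity gives
\[
\bigl[[d^E, \delta_{V_j}], [d^E, \delta_{\overline{V_k}}]\bigr] = [d^E, \delta_{[V_j, \overline{V_k}]}] + \Theta^E(V_j, \overline{V_k}),
\]
and flatness of $E$ kills the curvature term. The key remaining point is that $\pi_*[V_j, \overline{V_k}] = [\partial/\partial t^j, \partial/\partial \bar t^k] = 0$, so $W := [V_j, \overline{V_k}]$ is tangent to the fibres. For such $W$ the pullback commutes with contraction, so $i_t^*([d^E, \delta_W]\mathbf u) = d^{E_t}(\delta_{W|_{X_t}} u^t) + \delta_{W|_{X_t}}(d^{E_t} u^t)$; the K\"ahler and flatness hypotheses ensure that the $\dbar^t$-harmonic representative $u^t$ is in fact $d^{E_t}$-closed, so this collapses to a $d^{E_t}$-exact form, which $\mathbb H^t$ annihilates. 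Hence $[D_j, D_{\bar k}] u \equiv 0$. The pure-type commutators $[D_j, D_k]$ and $[D_{\bar j}, D_{\bar k}]$ are handled identically, using $\pi_*[V_j, V_k] = 0$ and $\pi_*[\overline{V_j}, \overline{V_k}] = 0$ together with the corresponding vanishing of $\Theta^E$.

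For the Gauss-Manin statement I assume $E$ is trivial, so that $d^E = d$ and the local system $R^k \pi_* \mathbb C$ is well-defined. I would use the characterisation that the Gauss-Manin connection $D^{GM}$ is the unique flat connection on $\mathcal H^k_E$ whose flat sections are represented by globally $d$-closed forms on $\mathcal X$. If $\mathbf u$ is such a representative, then $[d, \delta_{V_j}]\mathbf u = d(\delta_{V_j}\mathbf u)$ and $[d, \delta_{\overline{V_j}}]\mathbf u = d(\delta_{\overline{V_j}}\mathbf u)$ are globally $d$-exact; after $i_t^*$, which commutes with $d$, they remain exact on $X_t$ and are killed by $\mathbb H^t$. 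Thus $D^H$ annihilates a local $D^{GM}$-flat frame, and since both are flat connections on the same holomorphic bundle they must coincide.

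The main technical hurdle I anticipate is justifying the intertwining of $i_t^*$ with the contraction $\delta_W$ when $W$ has horizontal components: for fibre-tangent $W$ the identity $i_t^* \delta_W = \delta_{W|_{X_t}} i_t^*$ is clean, but a general $V_j$ mixes horizontal and vertical parts, and it is precisely the horizontal-lift prescription (together with the K\"ahler hypothesis) from the preceding proposition that makes all the commutator formulas consistent after projection to the fibre.
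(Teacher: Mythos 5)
Your proof of the flatness statement is essentially the paper's own argument. The paper also writes $(D_{\bar j}u)(t)=\mathbb H^t(i_t^*[d^E,\delta_{\overline{V_j}}]\mathbf u)$ and $(D_j u)(t)=\mathbb H^t(i_t^*[d^E,\delta_{V_j}]\mathbf u)$, uses the K\"ahler and flatness hypotheses (via Bochner--Kodaira--Nakano) to identify $\mathbb H^t$ with the $d^{E_t}$-harmonic projection so that the two factors of the commutator compose on the total space, and then invokes the identity $[[d^E,\delta_{V_j}],[d^E,\delta_{\overline{V_k}}]]=[d^E,\delta_{[V_j,\overline{V_k}]}]+\Theta^E(V_j,\overline{V_k})$ with $\Theta^E=0$. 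What you add --- correctly, and the paper leaves it implicit --- is the reason the residual term dies: $W=[V_j,\overline{V_k}]$ is vertical, $i_t^*$ then intertwines $\delta_W$ with $\delta_{W|_{X_t}}$, and since $u^t$ is $d^{E_t}$-closed the pullback collapses to the exact form $d^{E_t}(\delta_{W|_{X_t}}u^t)$, which $\mathbb H^t$ annihilates.

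For the Gauss--Manin identification the paper offers no proof at all, so your sketch is genuinely additional content, and the strategy (show $D^H$ kills a $D^{GM}$-flat frame) is the right one. There is, however, a gap to close: the formulas for $D_j u$ and $D_{\bar j}u$ are stated (via Theorem 3.1 of \cite{Wang16}) only for representatives $\mathbf u$ satisfying $i_t^*\mathbf u=u^t$ with $u^t$ \emph{harmonic} on every fibre. A globally $d$-closed form representing a flat section of $R^k\pi_*\mathbb C$ restricts on $X_t$ merely to a form cohomologous to the harmonic representative, so it is not a representative in the admissible class and cannot be inserted directly into the formula. One must first correct it by a form that is fibrewise $d^{E_t}$-exact (constructed smoothly in $t$, e.g.\ with the Green operators) and verify that this correction does not change $\mathbb H^t(i_t^*[d,\delta_{V_j}]\,\cdot\,)$; this is standard in the K\"ahler, flat setting, but it is precisely where the remaining work lies.
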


Since $D^H=D^h+\theta+\theta^*$, we know that the above theorem implies the Griffiths formula $$(D^h)^2+ \theta\theta^*+\theta^*\theta=0,$$
for the curvature of $\mathcal H^k_E$ with respect to the Hermitan norm defined by the $L^2$-norm on each fibre.


\begin{thebibliography}{99}


\bibitem{BPW16} B. Berndtsson, M. P\u aun and X. Wang, {\it Iterated Kodaira-Spencer map and its applications}, preprint.

\bibitem{Deligne69} P. Deligne, {\it Travaux de Griffiths}, S\'eminaire Bourbaki Exp. {\bf 376}, 1969-1970.

\bibitem{FL05} H. Fang and Z. Lu, {\it Generalized Hodge metrics and BCOV torsion on Calabi-Yau moduli}, J. Reine Angew. Math. {\bf 588} (2005), 49--69. 

\bibitem{G84} P. Griffiths (Ed.), {\it Topics in Transcendental Algebraic Geometry}, Princeton University Press, 1984.

\bibitem{GS69} P. Griffiths and W. Schmid, {\it Locally homogeneous complex manifolds}, Acta Math. {\bf 123} (1969),
253--302.

\bibitem{Hitchin87} N. J. Hitchin, {\it The self-duality equations on a Riemann surface}, Proc. London Math. Soc. {\bf 55} (1987), 59--126.

\bibitem{KSp} K. Kodaira and D. C. Spencer, {\it On deformations of complex analytic structures, III. Stability theorems for complex structures}, Ann. Math. {\bf 71} (1960), 43--76.

\bibitem{Lu99} Z. Lu, {\it On the geometry of classifying spaces and horizontal slices}, Amer. J. Math. {\bf 121} (1999), 177--198.

\bibitem{Lu01} Z. Lu, {\it On the Hodge metric of the universal deformation space of Calabi-Yau threefolds}, J. Geom.
Anal. {\bf 11} (2001), 103--118.

\bibitem{LS04} Z. Lu and X. Sun, {\it Weil-Petersson geometry on moduli space of polarized Calabi- Yau manifolds}, Jour. Inst. Math. Jussieu {\bf 3} (2004), 185--229.

\bibitem{PP14} G. Pearlstein and C. Peters, {\it Differential Geometry of the Mixed Hodge Metric}, arXiv:1407.4082v1

\bibitem{Simpson92} C. T. Simpson, {\it Higgs bundles and local systems}, Publications Math\'matiques de l'IH\'{E}S, {\bf 75} (1992), 5--95.

\bibitem{Wang16} X. Wang, {\it Curvature of higher direct image sheaves and its application on negative-curvature criterion for the Weil-Petersson metric}, arXiv:1607.03265.


\end{thebibliography}
\end{document}